\date{}
\theoremstyle{plain}
      \newtheorem{theorem}{Theorem}[section]
      \newtheorem{lemma}[theorem]{Lemma}
      \newtheorem{observation}[theorem]{Observation}
      \newtheorem{conjecture}[theorem]{Conjecture}
\theoremstyle{definition}
\theoremstyle{remark}
\title{A note on the no-$(d+2)$-on-a-sphere problem}
\author{Andrew Suk\thanks{Department of Mathematics, University of California at San Diego, La Jolla, CA 92093. Email: asuk@ucsd.edu. Research supported by an NSF
CAREER Award and by NSF Awards DMS-1952786 and DMS-2246847.} \and 
Ethan Patrick White\thanks{Department of Mathematics, University of Illinois Urbana-Champaign, Urbana, IL 61801. Email: epw@illinois.edu. Research supported by an NSERC PDF Award.}}
\date{}
\begin{document}

\maketitle

\begin{abstract} 

For fixed $d\geq 3$, we construct subsets of the $d$-dimensional lattice cube $[n]^d$ of size $n^{\frac{3}{d + 1} - o(1)}$ with no $d+2$ points on a sphere or a hyperplane.  This improves the previously best known bound of $\Omega(n^{\frac{1}{d-1}})$ due to Thiele from 1995.

\end{abstract}

\section{Introduction}

The famous \emph{no-three-in-line problem}, raised by Dudeney in 1917 in the special case $n = 8$, asks if it is possible to select $2n$ points from the lattice square $[n]^2$ such that no three are collinear.  Clearly, one cannot do better than $2n$ as $[n]^2$ can be covered by $n$ lines.  For $n\leq 52$, many authors have published solutions to this problem obtaining the bound of $2n$ (e.g.~see \cite{Fl}).  However, for large values of $n$, the best known lower bound is due to Hall et al.~\cite{Ha} which contains at least $\left(\frac{3}{2} - \varepsilon\right)n$ points, for any $\varepsilon > 0$ and $n$ sufficiently large.

The similar \emph{no-four-on-a-circle} problem, raised by Erd\H os and Purdy in 1981 (see \cite{RKG}), asks to determine the maximum number of points that can be selected from $[n]^2$ such that no four are on a circle.  Here, a line is also considered to be a \emph{degenerate} circle.  Again, we have the trivial upper bound of $3n$ as any vertical line must contain at most three points.  This upper bound was improved by Thiele \cite{thiele1}, who showed that at most $\frac{5}{2}n - \frac{3}{2}$ points can be selected, and moreover, he gave a construction of $\frac{n}{4}$ points from $[n]^2$ with no four on a circle (or  a line).

In this paper, we study the no-four-on-a-circle problem in higher dimensions (Problem 4 in Chapter 10 of \cite{brass}).  A \emph{$k$-sphere} is a $k$-dimensional sphere.  Thus, a 0-sphere is a pair of points, a 1-sphere is a circle, and etc.  For simplicity, we will simply use the term \emph{sphere} when referring to a $(d-1)$-dimensional sphere in $\mathbb{R}^d$.  Again, the maximum number of points that can be selected from the $d$-dimensional lattice cube $[n]^d$ with no $d+2$ points on a sphere or a hyperplane is at most $(d+1)n$ since we can cover $[n]^d$ with $n$ hyperplanes.  In the other direction, Thiele \cite{thiele2} showed that one can select $\Omega(n^{\frac{1}{d-1}})$ points from $[n]^d$ with no $d+2$ points on a sphere or a hyperplane, providing the first non-trivial construction for this problem.  In this note, we prove the following.

\begin{theorem}\label{higherD} Let $d \geq 3$ be a positive integer. Then there is a subset of the $d$-dimensional lattice cube $[n]^d$ with $n^{\frac{3}{d+1} -o(1)}$ points with no $d+2$ members on a sphere or a hyperplane.
\end{theorem}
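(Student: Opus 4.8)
The plan is to adapt the classical algebraic/probabilistic approach used for no-three-in-line type problems to the sphere setting, exploiting the fact that cospherical and coplanar conditions on $d+2$ points in $\RR^d$ are both captured by a single vanishing determinant. The key observation is that $d+2$ points $p_0,\dots,p_{d+1}\in\RR^d$ lie on a common sphere or hyperplane precisely when the $(d+2)\times(d+2)$ determinant built from the rows $(1,\; x_{i,1},\dots,x_{i,d},\; |x_i|^2)$ vanishes. This is the standard "lifting" of the points to the paraboloid in $\RR^{d+1}$ via $x\mapsto(x,|x|^2)$, under which spheres and hyperplanes in $\RR^d$ both become hyperplanes in $\RR^{d+1}$. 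So the no-$(d+2)$-on-a-sphere-or-hyperplane condition for a point set $S\subseteq[n]^d$ is equivalent to: the lifted set $\widehat S\subseteq\RR^{d+1}$ has no $d+2$ points on a hyperplane, i.e.\ the lifts are in general position (every $d+1$ of them affinely independent, and no affine dependence among $d+2$).

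First I would fix the target density and set up a probabilistic deletion. I would start from a large structured candidate set inside $[n]^d$ — most naturally points lying on a low-degree moment curve or a carefully chosen algebraic surface — so that the lifted points $\widehat S$ avoid many degeneracies automatically, and then estimate the expected number of "bad" $(d+2)$-tuples (those whose lift is coplanar). The plan is to choose a random subset of the candidate set at the appropriate rate, bound the expected number of bad tuples by linearity of expectation, and delete one point from each surviving bad tuple; the surviving set then has no $d+2$ on a sphere or hyperplane and, for the right rate, still has $n^{\frac{3}{d+1}-o(1)}$ points. Alternatively, and I suspect more in the spirit of getting the exponent $\frac{3}{d+1}$, one would take a \emph{random arithmetic progression structure} or a dilate/residue construction: pick points of the form determined by a single parameter $t$ ranging over an interval of size $\approx n^{3/(d+1)}$, mapped into the cube by a polynomial parametrization of degree $d+1$ in $t$, so that a cospherical/coplanar dependence among $d+2$ of them forces the vanishing of a univariate polynomial in the $t$-values of controlled degree, which then has few roots.

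The crucial quantitative step is controlling how often the lifted determinant vanishes. Expanding the $(d+2)\times(d+2)$ determinant along the moment-curve parametrization yields a polynomial equation in the chosen parameters whose degree I would bound explicitly; the number of solutions (bad tuples) is then governed by a Schwartz–Zippel / root-counting estimate, and comparing this count against the number of points lets me optimize the rate. Balancing the deletion so that at most half the points are removed is what pins down the exponent $\frac{3}{d+1}$, where the $3$ in the numerator reflects that the lift adds the extra coordinate $|x|^2$ (a degree-two constraint) on top of the $d$ original coordinates, effectively giving $d+1$ linear conditions but with one quadratic dependency among the data. The $o(1)$ loss in the exponent I would absorb through the standard twr-type or $(\log n)^{O(1)}$ savings coming from the probabilistic cleanup, or from choosing the parametrization to depend on a slowly growing parameter.

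The \textbf{main obstacle} I expect is twofold. First, ensuring the parametrized/structured candidate set genuinely meets $[n]^d$ in distinct lattice points while keeping the relevant polynomial's degree low — too high a degree destroys the exponent, while too low a degree reintroduces forbidden cospherical configurations. Second, and more seriously, controlling the \emph{affine}, not merely linear, dependence of the lifts: a vanishing of the moment-based determinant is exactly an affine dependence in $\RR^{d+1}$, and I must verify that the determinant does not vanish identically on the chosen surface (a genericity/nondegeneracy check), since if it did the construction would collapse. Establishing that nondegeneracy — likely by exhibiting one explicit $(d+2)$-tuple whose lift is affinely independent, or by a Vandermonde-type nonvanishing argument for the moment curve — is where I anticipate the real work, after which the root-counting and probabilistic deletion are comparatively routine.
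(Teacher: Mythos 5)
There is a genuine gap. Your lifting observation (that $d+2$ points lie on a common sphere or hyperplane iff their images under $x\mapsto (x,|x|^2)$ lie on a hyperplane in $\RR^{d+1}$) is correct but does none of the quantitative work: the entire difficulty is to exhibit a candidate set $A$ for which the number of cospherical $(d+2)$-tuples is at most about $|A|^{d+1}$, so that deletion at rate $p\approx |A|^{-d/(d+1)}$ leaves $|A|^{1/(d+1)}$ points, and to make $|A|$ as large as $n^{3}$. The paper achieves this by a two-stage argument: a first random sample at density $n^{3-d}$, whose size $n^3$ is forced by the fact that a $(d-2)$-sphere contains only $n^{d-3+o(1)}$ points of $[n]^d$ (so the sample meets every $(d-2)$-sphere in $n^{o(1)}$ points, making the point--sphere incidence graph $K_{t,2}$-free for $t=n^{o(1)}$); then a Zarankiewicz-type incidence bound for set systems of bounded VC-dimension, amplified by partitioning the cube into $D^d$ subcubes, to show the number of cospherical $(d+2)$-tuples in $A$ is $n^{3(d+1)+o(1)}$. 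Your proposal contains no substitute for this incidence-counting step, and your heuristic for the numerator $3$ (the quadratic lift coordinate) is not where it actually comes from -- it comes from the codimension-two sphere count $n^{d-3+o(1)}$.

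The structured one-parameter alternative you lean toward cannot reach the exponent $\frac{3}{d+1}$. A polynomial parametrization of degree $O(d)$ landing in $[n]^d$ carries at most $n^{1+o(1)}$ lattice points (e.g.\ the modular moment curve with a prime modulus gives $M\approx n$), and a sphere restricted to such a curve gives a polynomial of degree about $2d$, hence can still contain up to $2d\geq d+2$ curve points. Consequently the number of bad $(d+2)$-tuples is $O(M^{d+1})$, and heuristically $\Theta(M^{d+1})$ since a positive proportion of $(d+1)$-tuples determine a sphere meeting the curve again; root counting alone cannot improve this. Deletion then yields only about $M^{1/(d+1)}\leq n^{1/(d+1)+o(1)}$ points, which is weaker than Thiele's bound $\Omega(n^{1/(d-1)})$, let alone the claimed $n^{3/(d+1)-o(1)}$. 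To reach $\frac{3}{d+1}$ from a candidate set of size $M$ you would need at most $M^{d+2}n^{-3}$ bad tuples, i.e.\ a saving of $n^{2}$ over the trivial count when $M=n$, and nothing in the proposal supplies such a saving. The nondegeneracy issue you flag as the ``real work'' (the determinant not vanishing identically) is in fact the easy part; the missing ingredient is the incidence bound for rich spheres.
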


While it is possible that one can improve this bound to $\Omega(n)$, we make the following more modest conjecture.

\begin{conjecture}\label{mainconj}
    Let $d \geq 3$ be a positive integer. Then there is a subset of the $d$-dimensional lattice cube $[n]^d$ with $\Omega(n^{\frac{d}{d + 1}})$ points with no $d+2$ members on a sphere or a hyperplane.
\end{conjecture}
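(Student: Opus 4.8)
I would attack this conjecture by reducing it, via the paraboloid lift, to a general-position problem one dimension up, and then running a probabilistic deletion argument whose success is equivalent to a single counting estimate. Use $\phi:\RR^d\to\RR^{d+1}$, $\phi(p)=(p,\|p\|^2)$. A short computation shows that $d+2$ points of $[n]^d$ lie on a common sphere or hyperplane of $\RR^d$ if and only if their images under $\phi$ lie on a common hyperplane of $\RR^{d+1}$; since $d+2$ points in $\RR^{d+1}$ lie on a hyperplane exactly when they are affinely dependent, the problem becomes: find $\Omega(n^{d/(d+1)})$ points of $[n]^d$ whose lifts are in general position in $\RR^{d+1}$. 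I would then keep each point of $[n]^d$ independently with probability $q$ and delete one point from each surviving degenerate $(d+2)$-tuple. If $D$ denotes the number of (ordered) $(d+2)$-tuples of $[n]^d$ lying on a common sphere or hyperplane, the expected surviving general-position set has size at least $qn^d-q^{d+2}D$; optimizing over $q$ yields a set of size $\gtrsim (n^{d(d+2)}/D)^{1/(d+1)}$. Hence the whole statement reduces to the single bound $D\le n^{d(d+1)+o(1)}$, which produces the exponent $d/(d+1)$ up to an $n^{o(1)}$ loss.

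Next I would prove the counting bound by splitting $D$ into coplanar and cospherical tuples. The coplanar count is $\sum_H\binom{|H\cap[n]^d|}{d+2}$ over rational hyperplanes $H$; a routine estimate shows it is dominated by the $O(1)$ directions of smallest integer normal and is at most $n^{d(d+1)-1+o(1)}$, comfortably below the target. The cospherical count is the crux. Parametrizing each circumsphere by its center $c$ and squared radius $\rho$, one must bound $\sum_{c}\sum_{\rho} N_c(\rho)^{d+2}$, where $N_c(\rho)$ is the number of lattice points of $[n]^d$ at squared distance $\rho$ from $c$. For a single fixed center this sum is controlled by the $(d+2)$-nd moment of the sum-of-$d$-squares function $r_d$ over $\rho\le dn^2$, and it gives the right order of magnitude.

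The main obstacle is that the circumcenter of a lattice $(d+2)$-tuple is an arbitrary rational point whose denominator can be large, so the sum ranges over a vast family of centers rather than a bounded-denominator lattice. The heart of the matter is to control this family uniformly, balancing the very many centers of large denominator (each carrying few lattice points) against the few centers of small denominator (each carrying $\sim n^{d-2}$ lattice points), and to show that the total never exceeds $n^{d(d+1)+o(1)}$ for every $d\ge 4$. This is the open analytic-number-theoretic core; for $d=3$ it is tractable, which is exactly why the theorem already attains the conjectured exponent in that case.

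Finally, to remove the $n^{o(1)}$ and obtain a clean $\Omega(n^{d/(d+1)})$, I would look for an explicit algebraic construction, but one should expect a genuine obstruction here. A single polynomial curve $\gamma(t)$ cannot work: its lift $(\gamma(t),\|\gamma(t)\|^2)$ must meet every hyperplane of $\RR^{d+1}$ in at most $d+1$ points (moment-curve behavior), forcing $1,\gamma_1,\dots,\gamma_d,\|\gamma\|^2$ to be a basis of the space of polynomials of degree $\le d+1$; but $\deg\|\gamma\|^2\le d+1$ forces $\deg\gamma_i\le\lfloor (d+1)/2\rfloor$, leaving too few linearly independent low-degree coordinate functions once $d\ge 3$. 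Circumventing this — via a multi-parameter or number-theoretic construction that realizes moment-curve-like intersection behavior without a single low-degree parametrization — is the route I would pursue for the sharp exponent, and I expect it to be the principal difficulty beyond the counting lemma above.
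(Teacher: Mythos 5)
You have not proved the statement, and you should be clear-eyed that this is a conjecture: the paper itself offers no proof of it, and its Theorem \ref{higherD} reaches only $n^{\frac{3}{d+1}-o(1)}$, which matches the conjectured exponent solely when $d=3$ (and even then with an $n^{-o(1)}$ loss). Your reduction is sound as far as it goes: the paraboloid lift correctly converts cospherical/coplanar $(d+2)$-tuples into affinely dependent lifted tuples, and the deletion computation giving a set of size $\gtrsim (n^{d(d+2)}/D)^{1/(d+1)}$ is right, so the conjecture (up to $n^{o(1)}$) would indeed follow from the single bound $D \leq n^{d(d+1)+o(1)}$. But that bound is exactly the part you leave unproven, and you say so yourself: controlling the cospherical count over circumcenters of unbounded denominator is, in your words, ``the open analytic-number-theoretic core.'' A proof cannot delegate its central step to an open problem; what you have written is a research program whose hardest component is missing, not a proof. (A second, smaller gap: even granting the counting bound, your argument yields $n^{\frac{d}{d+1}-o(1)}$, not the clean $\Omega(n^{\frac{d}{d+1}})$ the conjecture asserts; you acknowledge this too, and your heuristic about why a single polynomial curve cannot remove the $o(1)$ is plausible but is not a substitute for a construction.)

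It is worth noting how your skeleton relates to what the paper actually does for Theorem \ref{higherD}: the paper runs precisely this lift-free analogue of your deletion scheme, but because the full-cube tuple count is unavailable for $d\geq 4$, it first sparsifies $[n]^d$ randomly to $\sim n^3$ points. That density is chosen exactly so that every $(d-2)$-sphere retains only $n^{o(1)}$ points, which makes the point--sphere incidence graph $K_{t,2}$-free with $t = n^{o(1)}$ and unlocks the VC-dimension/Zarankiewicz machinery (Theorem \ref{bipVC} plus Lemma \ref{sphereVC} and a subcube decomposition); this proves the cospherical $(d+2)$-tuple count $n^{3(d+1)+o(1)}$ for the sparsified set, and deletion then gives only $n^{\frac{3}{d+1}-o(1)}$. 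For $d=3$ the sparsification probability $n^{3-d}$ equals $1$, so the paper does establish your counting bound on the full cube in that one case --- which is the precise sense in which your remark about $d=3$ being tractable is correct. For $d\geq 4$, closing the gap between the sparsified count and the full-cube count $D \leq n^{d(d+1)+o(1)}$ (equivalently, showing that the circumcenters of typical lattice $(d+1)$-tuples have denominators large enough that their circumspheres are $n^{o(1)}$-poor) is the genuine obstruction, and your proposal identifies it without overcoming it.
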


Our paper is organized as follows.  In Section \ref{secvc}, we recall several results from VC-dimension theory that will be used in the proof of Theorem \ref{higherD}.  In Section \ref{proofhigherD}, we prove Theorem \ref{higherD}.   We conclude our paper with several remarks and related problems.  For sake of clarity, we systemically remove floor and ceilings whenever they are not crucial.  All logarithms are in based 2 unless stated otherwise.  

\section{VC-dimension theory}\label{secvc}
In this section, we recall several results from VC-dimension theory that will be used in the proof of Theorem \ref{higherD}.   First we introduce some terminology.

Let $G = (P,Q,E)$ be a bipartite graph with independent sets $P,Q$ and edges $E \subset P \times Q$. For $q \in Q$ we define the neighbourhood $N_Q(q) = \{ p \in P \colon (p,q) \in E\}$. Define the set-system $\mathcal{F} = \{ N_G(q) \colon q \in Q\}$ with ground set $P$. We say a set $S \subset P$ is \emph{shattered} by $\mathcal{F}$ if for every subset $B \subset S$, there is a set $A \in \mathcal{F}$ such that $A \cap S = B$. The \emph{Vapnik-Chervonenkis dimension} (VC-dimension) of $(P,\mathcal{F})$ is the largest integer $d$ for which there exists a subset $S \subset P$, with $|S| = d$, that is shattered by $\mathcal{F}$. 

The \emph{primal shatter function} of $(P,\mathcal{F})$ is defined as
\[ \pi_{\mathcal{F}}(z) = \max_{P'\subset P, |P'| = z} | \{ A \cap P' \colon A \in \mathcal{F} \}|. \]

\noindent The well-known Sauer-Shelah lemma \cite{sauer,shelah} states that if $d_0$ is the VC-dimension of $\mathcal{F}$, then 

\begin{equation}\label{sauer} \pi_{\mathcal{F}}(z)  \leq \sum_{i=0}^{d_0} \binom{z}{i}. \end{equation}

\noindent We will need the following result due to Fox, Pach, Sheffer, Suk, and Zahl.

\begin{theorem}[\cite{fox}]\label{bipVC}

Let $G = (P, Q, E)$ be a bipartite graph with $|P| = m$ and $|Q| = n$, such that the set
system $\mathcal{F}_1 = \{N(q) \colon q \in Q\}$, with ground set $P$, satisfies $\pi_{\mathcal{F}_1}(z) \leq cz^{d_0}$ for all~$z$. Then if $G$ is $K_{t,t}$-free, we have 

\[ |E(G)| \leq c_1(mn^{1-1/d_0} + n), \]
where $c_1 = c_1(c, d_0, t)$.  In particular, $c_1 \leq 10\cdot ct^{2d_0 + 1}(d_0\log d_0)^{d_0}.$

\end{theorem}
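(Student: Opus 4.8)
The plan is to prove this Zarankiewicz-type bound by combining a dyadic decomposition by degree with a packing argument for set systems of bounded shatter function. Writing $\mathcal{F} = \{N(q) : q \in Q\}$ on the ground set $P$, I first group the vertices of $Q$ according to the dyadic scale of their degree: for $j = 0, 1, \dots, \log m$, let $Q_j = \{q \in Q : \deg(q) \in [2^j, 2^{j+1})\}$ and $n_j = |Q_j|$. Since $|E(G)| = \sum_{q} \deg(q) \le \sum_j 2^{j+1} n_j$, it suffices to obtain, for each scale $\Delta = 2^j$, a good upper bound on $n_j$ (the number of vertices whose neighbourhood has size $\approx \Delta$), and then sum over the $O(\log m)$ scales.

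\textbf{Packing and clustering.} Fix a scale $\Delta \ge 2t$ and set $\delta = \Delta/(2t)$. Inside $Q_j$, greedily extract a maximal subfamily $\{N(q) : q \in I\}$ that is $\delta$-separated in the symmetric-difference (Hamming) metric on $P$. On the one hand, since $\mathcal{F}$ has primal shatter function at most $cz^{d_0}$, the primal-shatter-function form of Haussler's packing lemma should give $|I| \le C(c, d_0)\,(m/\delta)^{d_0}$. On the other hand, by maximality every $q \in Q_j$ satisfies $|N(q) \triangle N(q_i)| < \delta$ for some $q_i \in I$, which partitions $Q_j$ into clusters around the representatives. Within one cluster, any $t$ vertices $q_1, \dots, q_t$ satisfy $|N(q_1) \cap \dots \cap N(q_t)| \ge |N(q_i)| - t\delta \ge \Delta - t\delta = \Delta/2 \ge t$, which would exhibit a forbidden $K_{t,t}$; hence each cluster has fewer than $t$ vertices. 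Combining the two estimates yields $n_j \le (t-1)\,|I| \le C'(c,d_0,t)\,(m/\Delta)^{d_0}$.

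\textbf{Finishing and the main obstacle.} This produces two competing bounds at each scale, namely $n_j \Delta \le 2n\Delta$ (trivially, using $n_j \le n$) and $n_j\Delta \le C' m^{d_0}\Delta^{1-d_0}$ (from the previous step); optimizing the crossover at $\Delta^* \approx m\,n^{-1/d_0}$ and summing the two geometric tails gives $\sum_j 2^{j+1} n_j = O(mn^{1-1/d_0})$. The scales $\Delta < 2t$, where the clustering argument does not apply, contribute at most $2t \cdot n = O(tn)$ edges, which is exactly the source of the additive $+n$ term and of the $t$-dependence of $c_1$. The routine parts are the dyadic summation and the clustering-to-$K_{t,t}$ implication; the real work, and the step I expect to be the main obstacle, is establishing the packing bound $|I| \le C(c,d_0)(m/\delta)^{d_0}$ \emph{directly} from the polynomial shatter-function hypothesis rather than from the (possibly much larger) VC-dimension, and then carefully tracking every constant through the optimization to recover the explicit estimate $c_1 \le 10\, c\, t^{2d_0+1}(d_0\log d_0)^{d_0}$.
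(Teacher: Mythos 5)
This statement is not proved in the paper at all: Theorem~\ref{bipVC} is imported as a black box from Fox--Pach--Sheffer--Suk--Zahl \cite{fox}, so there is no in-paper proof to measure you against. Your sketch is, in substance, a correct reconstruction of the argument in \cite{fox}. The only structural difference is minor: \cite{fox} runs a minimum-degree peeling induction on $|Q|$ (the clustering-plus-packing step shows that if every vertex of $Q$ has degree at least $k$ then $n\le t\cdot C(mt/k)^{d_0}$, hence some vertex has degree $O(t^{1+1/d_0}mn^{-1/d_0}+t)$, which is deleted and the bound summed over shrinking $n$), whereas you stratify $Q$ dyadically by degree and optimize the crossover scale $\Delta^*\approx mn^{-1/d_0}$; both routes rest on exactly the same two ingredients. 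Your clustering step is sound: for $t$ vertices in one cluster, $N(q_1)\cap\cdots\cap N(q_t)\supseteq N(q_i)\setminus\bigcup_{l}\bigl(N(q_l)\,\triangle\,N(q_i)\bigr)$ has size at least $\Delta-t\delta=\Delta/2\ge t$, so $K_{t,t}$-freeness caps each cluster at $t-1$ vertices; and the dyadic summation works for $d_0\ge 2$ (for $d_0=1$ your high-degree tail is not geometric and picks up a $\log m$ factor, but the application here has $d_0=d+1\ge 4$, so this is harmless). The ingredient you correctly flag as the crux and leave unproved, namely $|I|\le C(c,d_0)(m/\delta)^{d_0}$ for a $\delta$-separated family in the symmetric-difference metric under only the polynomial shatter-function hypothesis, is precisely Chazelle's generalization of Haussler's packing lemma (see Matou\v{s}ek's \emph{Geometric Discrepancy}, Section 5.3), and \cite{fox} likewise quotes it without proof; it is also the source of the $(d_0\log d_0)^{d_0}$ factor in the stated constant, since $\pi_{\mathcal{F}_1}(z)\le cz^{d_0}$ only forces VC-dimension $O(d_0\log d_0)$. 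So there is no gap in the plan; the only part of the statement your sketch does not actually deliver is the explicit numerical bound $c_1\le 10\,c\,t^{2d_0+1}(d_0\log d_0)^{d_0}$, which requires carrying the constants of the packing lemma through the optimization.
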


A subset $A \subset [n]^d$ is a \emph{maximal spherical set of $[n]^d$} if all points in $A$ lie on a single sphere $S$ in $\mathbb{R}^d$, and no point of $[n]^d$ can be added to $A$ while keeping all points on the sphere $S$. Let $\mathcal{S}_{n,d}$ denote the set-system of all maximal spherical sets of $[n]^d$, with ground set $[n]^d$.

\begin{lemma}\label{sphereVC}

For $d\geq 2$, the VC-dimension of the set-system $([n]^d,\mathcal{S}_{n,d})$ is at most $d+1$. 

\end{lemma}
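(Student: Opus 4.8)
The plan is to linearize the incidence relation ``point lies on a sphere'' by a standard lifting into one higher dimension, and then read off the VC-dimension from elementary linear algebra. Concretely, I would map each point $x = (x_1,\dots,x_d) \in \mathbb{R}^d$ to the vector
\[ v(x) = \bigl(x_1^2 + \cdots + x_d^2,\, x_1,\, \dots,\, x_d,\, 1\bigr) \in \mathbb{R}^{d+2}. \]
Any sphere in $\mathbb{R}^d$ is the zero set of an equation $a(x_1^2 + \cdots + x_d^2) + b_1 x_1 + \cdots + b_d x_d + c = 0$ with $a \neq 0$; writing $w = (a,b_1,\dots,b_d,c) \in \mathbb{R}^{d+2}$, a point $x$ lies on this sphere if and only if $\langle v(x), w \rangle = 0$. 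Thus, after lifting, each incidence ``$x$ lies on the sphere $w$'' becomes a single homogeneous linear equation, and realizing a subset $B \subseteq S$ as $A \cap S$ for some maximal spherical set $A$ becomes exactly the problem of finding a nonzero $w$ (with $a \neq 0$) whose orthogonal complement contains the lifted points of $B$ and misses those of $S \setminus B$.

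With this dictionary in place, I would suppose for contradiction that some set $S = \{p_1,\dots,p_{d+2}\} \subseteq [n]^d$ of size $d+2$ is shattered, and set $v_i = v(p_i)$. These are $d+2$ vectors in $\mathbb{R}^{d+2}$. To realize the full subset $B = S$ one needs a sphere through all of them, i.e.\ a nonzero $w$ with $\langle v_i, w\rangle = 0$ for every $i$; such a $w$ exists only if the $v_i$ fail to span $\mathbb{R}^{d+2}$. Hence shattering already forces a nontrivial linear dependence $\sum_{i=1}^{d+2} \lambda_i v_i = 0$. Because the last coordinate of each $v_i$ equals $1$, no single $v_i$ vanishes, so the support $J = \{i : \lambda_i \neq 0\}$ of this dependence satisfies $|J| \geq 2$.

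The key step is to exhibit one subset that cannot be realized. I would fix any $j_0 \in J$ and consider $B = J \setminus \{j_0\}$, which is a genuine nonempty proper subset of $S$ since $|J| \geq 2$. For every $w$, the dependence yields $\sum_{i \in J} \lambda_i \langle v_i, w\rangle = \bigl\langle \sum_i \lambda_i v_i,\, w\bigr\rangle = 0$. If some $w$ realized $B$, then $\langle v_i, w\rangle = 0$ for all $i \in J \setminus \{j_0\}$, and this identity collapses to $\lambda_{j_0}\langle v_{j_0}, w\rangle = 0$; since $\lambda_{j_0} \neq 0$ this forces $\langle v_{j_0}, w\rangle = 0$, meaning $p_{j_0}$ also lies on the sphere, contradicting $j_0 \notin B$. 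Thus $B$ is unachievable and $S$ is not shattered, giving VC-dimension at most $d+1$. I do not expect a serious obstacle; the only points requiring care are (i) noting that the unrealizability of $B$ holds for \emph{every} $w \in \mathbb{R}^{d+2}$, hence in particular for the genuine spheres ($a \neq 0$) comprising $\mathcal{S}_{n,d}$, and (ii) using the appended constant $1$ in the lift, which is precisely what guarantees $|J| \geq 2$ and therefore that $B = J \setminus \{j_0\}$ is a legitimate subset of $S$.
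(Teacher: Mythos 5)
Your proof is correct, but it takes a genuinely different route from the paper's. The paper argues by induction on $d$ with a case split: if the $d+2$ points are in general position, any $d+1$ of them determine a unique sphere, so the sphere realizing the full set makes a trace of size exactly $d+1$ impossible; otherwise some $d+1$ of the points lie on a hyperplane $h$, and since a sphere meets $h$ in a $(d-2)$-sphere, the inductive hypothesis applied inside $h$ blocks shattering. You instead linearize via the standard lift $x \mapsto (\lVert x\rVert^2, x, 1) \in \mathbb{R}^{d+2}$ and run the usual Radon-type argument: realizing the full set forces the $d+2$ lifted vectors into a proper subspace, the resulting dependence has support $J$ with $|J| \geq 2$ (thanks to the appended coordinate $1$), and the subset $J \setminus \{j_0\}$ is then unrealizable because any sphere through it is forced through $p_{j_0}$ as well. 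Your version avoids both the induction and the general-position case analysis, handles degenerate spheres ($a=0$, i.e.\ hyperplanes) for free, and generalizes verbatim to any set system cut out by a $(d+2)$-dimensional linear space of functions; the paper's version stays closer to the geometry and reuses the same dimension-reduction fact (sphere intersected with hyperplane is a lower-dimensional sphere) that it needs elsewhere. Both are complete proofs of the bound $d+1$.
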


\begin{proof}
We proceed by induction on $d$.  The base case $d = 1$ is trivial since a 0-sphere consists of 2 points.  For the inductive step, assume the statement holds for $d' < d$.  For sake of contradiction, suppose that a set $Q$ of $d+2$ points in $[n]^d$ is shattered by $\mathcal{S}_{n,d}$. 

\medskip

\noindent \emph{Case 1.}  Suppose $Q$ is in general position, that is, no $d+1$ members in $Q$ lie on a hyperplane.  By linear algebra, any $d+1$ points in $Q$ determines a unique sphere in $\mathbb{R}^d$. 
 However, since $Q$ is shattered by $\mathcal{S}_{n,d}$, there is a sphere that contains $Q$.  Hence, there is no sphere that contains exactly $d + 1$ points from $Q$.  Contradiction.

 \medskip
\noindent \emph{Case 2.}  Suppose $Q$ contains a $(d+1)$-tuple $Q'$ that lies on a hyperplane $h$ in $\mathbb{R}^d$.  Since the intersection of a sphere with $h$ is a $(d-2)$-sphere in $h$, by induction on $d$, $Q'$ cannot be shattered by $\mathcal{S}_{n,d}$.  Hence, $Q$ cannot be shattered by  $\mathcal{S}_{n,d}$, which is a contradiction. \end{proof}

\section{Proof of Theorem~\ref{higherD}} \label{proofhigherD}
 
In this section, we prove Theorem \ref{proofhigherD}.  First, let us recall several results.  The first is the well-known Chernoff inequality (see \cite[Theorem 2.8]{janson}).

\begin{lemma}[Chernoff's inequality]\label{chernoff}

    Let $X_1,\ldots, X_n$ be independent random variables such that $\mathbb{P}(X_i = 1) = q$ and $\mathbb{P}(X_i = 0) = 1-q$, and let $X = \sum_i X_i$. Then for $0 < \delta < 1$, we have

$$\mathbb{P}(X \geq (1 + \delta)qn) \leq e^{-\frac{\delta^2qn}{3}},$$and
$$\mathbb{P}(X \leq (1 - \delta)qn) \leq e^{-\frac{\delta^2qn}{3}}.$$

\end{lemma}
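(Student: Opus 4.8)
The plan is to apply the standard exponential moment method (Chernoff's method), establishing the upper and lower tail bounds separately. In each case I would bound the moment generating function of $X$ using independence, and then optimize over a free parameter.

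For the upper tail, fix a parameter $t > 0$. Since $x \mapsto e^{tx}$ is increasing, Markov's inequality applied to the nonnegative random variable $e^{tX}$ gives
$$\mathbb{P}(X \geq (1+\delta)qn) = \mathbb{P}\bigl(e^{tX} \geq e^{t(1+\delta)qn}\bigr) \leq e^{-t(1+\delta)qn}\,\mathbb{E}\bigl[e^{tX}\bigr].$$
By independence of the $X_i$, the moment generating function factors, and a single Bernoulli computation gives $\mathbb{E}\bigl[e^{tX}\bigr] = \prod_{i=1}^n \mathbb{E}\bigl[e^{tX_i}\bigr] = (1 - q + qe^t)^n$. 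Using the elementary inequality $1 + y \leq e^y$ with $y = q(e^t - 1)$, I would bound this by $e^{qn(e^t - 1)}$, so that $\mathbb{P}(X \geq (1+\delta)qn) \leq \exp\bigl(qn(e^t - 1 - t(1+\delta))\bigr)$ for every $t > 0$.

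The next step is to choose $t$ optimally. Differentiating the exponent in $t$ shows the minimum is attained at $t = \ln(1+\delta)$, which is positive since $\delta > 0$; substituting yields the classical form
$$\mathbb{P}(X \geq (1+\delta)qn) \leq \left(\frac{e^{\delta}}{(1+\delta)^{1+\delta}}\right)^{qn}.$$
It then remains to show the base is at most $e^{-\delta^2/3}$, equivalently that $(1+\delta)\ln(1+\delta) - \delta \geq \delta^2/3$ for $0 < \delta < 1$. This is a routine one-variable calculus check: setting $f(\delta) = (1+\delta)\ln(1+\delta) - \delta - \delta^2/3$, one has $f(0) = 0$ and $f'(\delta) = \ln(1+\delta) - \tfrac{2}{3}\delta$, and the latter is nonnegative on $(0,1)$ (for instance because it is concave in $\delta$, vanishes at $\delta = 0$, and takes the positive value $\ln 2 - \tfrac{2}{3} > 0$ at $\delta = 1$), so $f \geq 0$ on $[0,1]$.

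The lower tail is treated symmetrically using a negative exponent. For $t > 0$, applying Markov's inequality to $e^{-tX}$ gives $\mathbb{P}(X \leq (1-\delta)qn) \leq e^{t(1-\delta)qn}\,\mathbb{E}\bigl[e^{-tX}\bigr]$, the same factorization together with $1 + y \leq e^y$ gives $\mathbb{E}\bigl[e^{-tX}\bigr] \leq e^{qn(e^{-t} - 1)}$, and optimizing at $t = -\ln(1-\delta) > 0$ produces $\mathbb{P}(X \leq (1-\delta)qn) \leq \bigl(e^{-\delta}/(1-\delta)^{1-\delta}\bigr)^{qn}$. The analogous scalar inequality $(1-\delta)\ln(1-\delta) + \delta \geq \delta^2/2 \geq \delta^2/3$ then completes the bound. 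I expect the only genuine work to lie in these two scalar estimates for $(1\pm\delta)\ln(1\pm\delta)$; everything else is the routine Markov-plus-independence machinery. Because the target exponent $\delta^2/3$ is deliberately loose (the lower tail in fact admits the stronger constant $\tfrac{1}{2}$), the calculus inequalities carry comfortable slack and present no real obstacle.
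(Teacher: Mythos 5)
Your proof is correct. Note that the paper does not prove this lemma at all: it is quoted as a known result with a citation to Janson, \L uczak, and Ruci\'nski \cite{janson}, so there is no internal proof to compare against. Your argument --- Markov's inequality applied to $e^{\pm tX}$, factoring the moment generating function by independence, the bound $1+y\leq e^y$, optimizing at $t=\pm\ln(1\pm\delta)$, and then the scalar estimates $(1+\delta)\ln(1+\delta)-\delta\geq \delta^2/3$ and $(1-\delta)\ln(1-\delta)+\delta\geq \delta^2/2$ --- is exactly the standard derivation found in that reference and elsewhere, and all the steps check out, including the concavity argument showing $\ln(1+\delta)-\tfrac{2}{3}\delta\geq 0$ on $[0,1]$.
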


Next, we estimate the number of points from $[n]^d$ that can lie on a sphere or a hyperplane.  For the latter, we will use the following result due to Balogh and White.

\begin{lemma}[\cite{balogh}]\label{hyperplaneBound} 
Let $d \geq 2$ be a positive integer, and $a_1,\ldots,a_d \in \mathbb{Z}$ not be all zero with greatest common denominator 1. Let $s = \max_i\{|a_i|\}$, $n \geq s$ be a positive integer, and set
\[ \mathcal{L} = \{ (x_1,\ldots,x_d) \in \mathbb{Z}^d \colon \sum_{i=1}^d a_ix_i = 0 \}.\]
If $\mathcal{L} \cap [n]^d$ spans a $(d-1)$-dimensional subspace, then $|\mathcal{L} \cap [n]^d| \leq 3^dn^{d-1}/s$. 
    
\end{lemma}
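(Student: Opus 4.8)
The plan is to reduce the count to a single linear congruence and then exploit the coprimality hypothesis $\gcd(a_1,\dots,a_d)=1$. After permuting coordinates (which preserves $[n]^d$ and the count), I would assume without loss of generality that $|a_d| = s = \max_i|a_i|$; in particular $a_d \neq 0$, since the $a_i$ are not all zero. The key observation is that any point $(x_1,\dots,x_d)\in\mathcal{L}$ is completely determined by its first $d-1$ coordinates, because $a_d x_d = -\sum_{i=1}^{d-1}a_i x_i$ forces $x_d = -a_d^{-1}\sum_{i=1}^{d-1}a_ix_i$. Hence it suffices to bound the number of tuples $(x_1,\dots,x_{d-1})\in[n]^{d-1}$ for which this expression is an integer; a necessary condition is the divisibility $\sum_{i=1}^{d-1}a_ix_i \equiv 0 \pmod{s}$, and I will bound the number of such tuples.

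Next I would count modulo $s$. Let $N$ be the number of residue vectors $(r_1,\dots,r_{d-1})\in(\mathbb{Z}/s\mathbb{Z})^{d-1}$ with $\sum_{i=1}^{d-1}a_i r_i \equiv 0 \pmod{s}$. The map $(\mathbb{Z}/s\mathbb{Z})^{d-1}\to\mathbb{Z}/s\mathbb{Z}$ sending $(r_i)$ to $\sum a_i r_i$ is a group homomorphism whose image is the subgroup of $\mathbb{Z}/s\mathbb{Z}$ generated by $\gcd(a_1,\dots,a_{d-1},s)$. Since $s=|a_d|$ and $\gcd(a_1,\dots,a_d)=1$, this gcd equals $1$, so the homomorphism is surjective and its kernel has size $N = s^{d-1}/s = s^{d-2}$. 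This is precisely the step that uses the coprimality hypothesis.

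Finally I would pass from residues back to $[n]$. Because $n\geq s$, each fixed residue class modulo $s$ has at most $\lfloor n/s\rfloor + 1 \leq 2n/s$ representatives in $[n]$. Grouping the admissible tuples $(x_1,\dots,x_{d-1})$ by their residues modulo $s$, only the $N=s^{d-2}$ admissible residue vectors contribute, and each contributes at most $(2n/s)^{d-1}$ tuples. Since each such tuple yields at most one point of $\mathcal{L}\cap[n]^d$, I obtain
\[
|\mathcal{L}\cap[n]^d| \;\leq\; s^{d-2}\left(\tfrac{2n}{s}\right)^{d-1} \;=\; \frac{2^{d-1}n^{d-1}}{s} \;\leq\; \frac{3^d n^{d-1}}{s},
\]
as claimed. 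I note that the spanning hypothesis is not actually needed for this upper bound (it could only make the count larger); it presumably appears because the lemma is quoted verbatim from \cite{balogh}.

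I expect the only genuine subtlety to be the second step, namely correctly evaluating the number of solutions of the homogeneous congruence modulo $s$ as $s^{d-2}$, which rests on the surjectivity coming from $\gcd(a_1,\dots,a_d)=1$; a natural alternative would be a lattice-covolume argument using that $\mathcal{L}$ is a saturated rank-$(d-1)$ sublattice of covolume $\sqrt{\sum_i a_i^2}\geq s$, but that route requires convex-geometry error estimates, whereas the congruence count gives the explicit constant directly. Everything else is elementary bookkeeping, and the comfortable slack between $2^{d-1}$ and $3^d$ means I need not be careful about the rounding in $\lceil n/s\rceil$.
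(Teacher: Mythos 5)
The paper offers no proof of this lemma---it is imported verbatim from \cite{balogh}---so there is no in-paper argument to measure yours against; I can only assess your proof on its own terms, and it is correct and complete. The reduction to the congruence $\sum_{i=1}^{d-1} a_i x_i \equiv 0 \pmod{s}$ after permuting so that $|a_d| = s$ is sound: integrality of $x_d$ is exactly divisibility of $\sum_{i=1}^{d-1}a_ix_i$ by $|a_d|$, and since you only want an upper bound, discarding the constraint $x_d \in [n]$ is harmless. The residue count is the one step where care is needed, and you handle it correctly: the image of the homomorphism $(\mathbb{Z}/s\mathbb{Z})^{d-1} \to \mathbb{Z}/s\mathbb{Z}$ is the subgroup generated by $\gcd(a_1,\ldots,a_{d-1},s)$, which equals $\gcd(a_1,\ldots,a_d)=1$ precisely because $s = |a_d|$, so the kernel has exactly $s^{d-1}/s = s^{d-2}$ elements (and this degenerates gracefully when $s=1$ or when $a_1=\cdots=a_{d-1}=0$). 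The final bookkeeping $s^{d-2}\left(2n/s\right)^{d-1} = 2^{d-1}n^{d-1}/s \leq 3^d n^{d-1}/s$ is right, with $\lfloor n/s\rfloor + 1 \leq 2n/s$ justified by the hypothesis $n \geq s$. Your remark that the spanning hypothesis is not needed for the upper bound is also correct; it is part of the statement as quoted from \cite{balogh} rather than an ingredient of the estimate. In fact your argument yields the slightly stronger constant $2^{d-1}$ in place of $3^d$.
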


\begin{lemma}\label{cohyperplaneCount}

The number of $(d+2)$-tuples in $[n]^d$ that lie on a common hyperplane is at most $O(n^{d^2+d-1})$. 
    
\end{lemma}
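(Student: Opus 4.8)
The plan is to bound the number of such $(d+2)$-tuples by
\[ \sum_h \binom{m(h)}{d+2}, \]
where the sum ranges over all affine hyperplanes $h \subset \RR^d$ with an integer primitive normal vector, and $m(h) := |h \cap [n]^d|$. Every $(d+2)$-tuple lying on a common hyperplane lies on at least one such integer hyperplane: if its points affinely span the hyperplane then that hyperplane is already rational, and otherwise the points lie on a lower-dimensional rational flat that extends to an integer hyperplane. Hence the displayed sum counts each tuple at least once and is a valid upper bound.

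First I would organize the sum by normal direction. Write each hyperplane as $\{x : a\cdot x = c\}$ with $a \in \ZZ^d$ primitive, and set $s = \max_i |a_i|$. For fixed $a$, the quantity $a\cdot x$ ranges over an interval of length $O(nds)$ as $x$ runs over $[n]^d$, so there are $O(ns)$ relevant offsets $c$ (the fixed constant $d$ is absorbed). The number of primitive directions with $\max_i|a_i| = s$ is $O(s^{d-1})$, since these lattice points lie on the boundary of $[-s,s]^d$. The key quantitative input is an affine analogue of Lemma \ref{hyperplaneBound}: any hyperplane with primitive normal $a$ satisfies $m(h) = O(n^{d-1}/s)$. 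I would deduce this from Lemma \ref{hyperplaneBound} by translation, fixing a point $x_0 \in h\cap[n]^d$ and replacing every other point $x$ by $x - x_0 \in \{y : a\cdot y = 0\}\cap[-n,n]^d$, which reduces matters to the homogeneous case on a cube of side $O(n)$.

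Combining these estimates and using $\binom{m(h)}{d+2} = O(m(h)^{d+2})$, the total contribution of all hyperplanes with a given $s$ is
\[ O(s^{d-1}) \cdot O(ns) \cdot O\!\left((n^{d-1}/s)^{d+2}\right) = O\!\left(n^{(d-1)(d+2)+1}\, s^{-2}\right). \]
Since $\sum_{s\ge 1} s^{-2}$ converges, summing over $s$ leaves $O(n^{(d-1)(d+2)+1}) = O(n^{d^2+d-1})$, as claimed.

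The hard part will be the affine lattice-point bound together with the careful treatment of degenerate tuples, namely those whose $d+2$ points fail to affinely span the containing hyperplane. For an upper bound these are harmless, since they only decrease $m(h)$ and are still counted by some integer hyperplane; but one must check that the reduction to the homogeneous Lemma \ref{hyperplaneBound} remains valid when $h\cap[n]^d$ does not span a full $(d-1)$-dimensional subspace, in which case the intersection is even smaller and the bound holds a fortiori. I want to emphasize that the $1/s$ savings in $m(h)$ is essential: without it the sum over directions would no longer converge, and the naive argument (fixing $d$ points to determine the hyperplane, then placing two more on it) yields only the weaker exponent $d^2 + 2d - 2$.
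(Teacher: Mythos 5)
Your proof is correct and follows essentially the same route as the paper's: both rest on Lemma~\ref{hyperplaneBound}, group hyperplanes by the size $s$ of the primitive normal, and exploit the $1/s$ savings so that the sum over directions converges via $\sum_s s^{-2}$. The only difference is bookkeeping --- the paper anchors at one point of the tuple ($n^d$ choices) and counts $(d+1)$-tuples on homogeneous hyperplanes through it, whereas you enumerate affine hyperplanes by direction and offset ($O(s^{d-1})\cdot O(ns)$ choices) and count $(d+2)$-tuples per hyperplane; both give the exponent $d^2+d-1$, and your treatment of the degenerate non-spanning case is at the same level of detail as the paper's own remark.
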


\begin{proof} By the lemma above, the number of $(d+1)$-tuples in $[n]^d$ that lie on a hyperplane which passes through the origin is at most

    \[ \sum_s \left( \frac{n^{d-1}}{s} \right)^{d+1} s^{d-1} = O(n^{d^2 - 1}). \]

\noindent By symmetry, for any fixed point $p\in [n]^d$, the number of $(d+1)$-tuples in $[n]^d$ that lies on a hyperplane which passes through $p$ is at most  $O(n^{d^2 - 1}).$  After summing over all points in $[n]^d$, the statement follows. Note that Lemma~\ref{cohyperplaneCount} applies to hyperplanes whose intersection with $[n]^d$ spans a $(d-1)$-dimensional subspace. We can assume that any $(d+2)$-tuple in $[n]^d$ lying on a common hyperplane meets this requirement by adding other points from $[n]^d$ if needed.\end{proof}

Finally, we will use the following result due to Sheffer that bounds the number of points from $[n]^d$ that lies on a $k$-sphere in $\mathbb{R}^d$, for $k \leq d-1$.  See also \cite[Sec.~11.2, Equation~11.9]{iwaniec}, \cite[Theorem~2]{shel}.

\begin{lemma}[\cite{sheffer}, Lemma 3.2]\label{simplepoorCircles} Let $d \geq 2$ and $k\leq d-1$. 
 Then there is a positive constant $c = c(d)$ such that every $k$-sphere $\mathbb{R}^d$ contains at most $n^{k-1 + c/\log\log n }$ points from $[n]^d$.  
    
\end{lemma}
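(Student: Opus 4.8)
The plan is to prove the lemma by induction on $k$, peeling off one dimension at a time by slicing the $k$-sphere with axis-parallel hyperplanes, so that the entire arithmetic content is pushed into the base case $k=1$ (circles). The inductive step will be purely combinatorial and will lose nothing in the exponent, while the base case is where the factor $n^{c/\log\log n}$ is genuinely forced.

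For the inductive step, assume the bound for $(k-1)$-spheres and let $\Sigma$ be a $k$-sphere, contained in a $(k+1)$-dimensional affine flat $F\subseteq\mathbb{R}^d$. Since $\Sigma$ affinely spans $F$ and $\dim F=k+1\geq 2$, some coordinate function $x_i$ is non-constant on $F$, hence non-constant on $\Sigma$, so no hyperplane $\{x_i=t\}$ contains $\Sigma$. Every point of $[n]^d$ has $x_i\in\{1,\dots,n\}$, so it suffices to bound, for each of the at most $n$ integer values $t$, the number of points of $[n]^d$ lying on the slice $\Sigma\cap\{x_i=t\}$. Inside $F\cong\mathbb{R}^{k+1}$ this slice is the intersection of the $k$-sphere $\Sigma$ with an affine hyperplane of $F$, so it is empty, a single point, or a genuine $(k-1)$-sphere; in every case the induction hypothesis bounds its lattice points by $n^{(k-1)-1+c/\log\log n}=n^{k-2+c/\log\log n}$. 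Summing over the $\leq n$ values of $t$ gives at most $n\cdot n^{k-2+c/\log\log n}=n^{k-1+c/\log\log n}$ points on $\Sigma$. The recursion $f(k)=1+f(k-1)$ is exact, so the constant $c$ does not grow through the $\leq d$ steps: the same $c$ produced by the base case works for all $1\leq k\leq d-1$, the bounded constant factors incurred by slicing being $n^{o(1)}$ and absorbable into a slightly larger $c$.

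The base case $k=1$ carries the real work. Here $\Sigma=C$ is a circle inside a $2$-flat $F$, and $[n]^d\cap C\subseteq \mathbb{Z}^d\cap F$. The group $\mathbb{Z}^d\cap F$ is a lattice of rank at most $2$; if its rank is at most $1$ it lies on a line and meets $C$ in at most two points, so we may assume it is a rank-$2$ lattice $\Lambda$ (in particular $F$ is rational). If $C$ carries at least three lattice points it is determined by them, so its centre and squared radius are rational with denominators bounded polynomially in $n$. Expressing a point of $\Lambda$ in a fixed integer basis as $(u,v)$, the equation of $C$ becomes $Q(u,v)=N$, where $Q$ is a fixed positive-definite binary quadratic form and $N$ is a positive integer of size $n^{O(d)}$. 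The number of integer representations of $N$ by a fixed positive-definite binary quadratic form is $N^{O(1/\log\log N)}$ — the classical divisor/sum-of-two-squares bound — and since $\log\log\!\big(n^{O(d)}\big)=(1+o(1))\log\log n$, this is $n^{O(1/\log\log n)}$, which fixes the constant $c=c(d)$.

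The main obstacle is thus located entirely in this base case: the slicing is soft and exact in the exponent, but the unavoidable $n^{c/\log\log n}$ factor reflects, and can only be supplied by, the arithmetic of lattice points on circles, i.e.\ bounds on representations of an integer by a binary quadratic form. The secondary technical points to handle carefully are (i) confirming that the slices are honest $(k-1)$-spheres and that the degenerate tangent or empty slices contribute at most one point each, and (ii) the lattice bookkeeping in the base case — distinguishing rational from irrational $2$-flats, passing to a fixed integral basis of $\mathbb{Z}^d\cap F$, and bounding the denominators of the centre and radius so that the represented value $N$ stays polynomial in $n$.
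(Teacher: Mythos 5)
The paper does not prove this lemma at all: it is imported verbatim as Lemma~3.2 of Sheffer \cite{sheffer}, with pointers to \cite[Sec.~11.2]{iwaniec} and \cite{shel}, so there is no in-paper argument to compare against. Your proof is essentially the standard proof given in those sources, and it is sound: the induction on $k$ via axis-parallel slicing is exact in the exponent (note that you correctly stop at the base case $k=1$, which matters because the claimed bound is false for $0$-spheres, and for $k\ge 2$ the degenerate slices contribute at most one point each, which is absorbed since $n^{k-2+c/\log\log n}\ge 1$), and the base case correctly isolates the arithmetic content in the representation count for a positive-definite binary quadratic form. Two points deserve explicit care in a write-up. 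First, $\mathbb{Z}^d\cap F$ is a coset of a lattice rather than a lattice (you should translate by one lattice point of $C$ before choosing the integral basis), and after clearing denominators the lattice points on $C$ inject into solutions of $Q(u',v')=N$ with $u',v'$ in fixed residue classes, so the representation bound still applies. Second, the bound ``number of representations of $N$ by a positive-definite integral binary form is $N^{O(1/\log\log N)}$'' must be uniform in the form, since the Gram form of $\Lambda$ has discriminant that grows polynomially with $n$; this uniformity does hold (the count is controlled by $d(N)^{O(1)}$ independently of the discriminant, via the divisor bound of the type in \cite{nicolas}), but it is the one place where a casual appeal to ``the sum-of-two-squares bound'' would be too glib. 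With those caveats addressed, your argument is a correct self-contained proof of the cited lemma.
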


\medskip

Before we prove Theorem \ref{higherD}, let us give a brief outline of the argument.  First, we use the probabilistic method and Lemmas \ref{cohyperplaneCount} and \ref{simplepoorCircles} to obtain a large subset $A\subset [n]^d$ such that very few members of $A$ lie on a $(d-2)$-sphere or a hyperplane in $\mathbb{R}^d$.  We then apply Theorem \ref{bipVC} and ideas from incidence geometry to estimate the number of $r$-tuples in $A$ that lie on a common sphere.  Finally, we apply the deletion method to find a large subset $A'' \subset A$ such that no $d+2$ members of $A''$ lie on a sphere or a hyperplane.  We now flesh out all of the details of the proof.
\medskip

\begin{proof}[Proof of Theorem \ref{higherD}]
    Without loss of generality, we can assume that $n$ is a power of 2.  Indeed, otherwise we can find an integer $n' < n$ that is a power of 2 such that $n' > n/2$, and apply the arguments below to the subcube $[n']^d$.  This would only change the hidden constant in the $o(1)$ term in the exponent.  Moreover, we will assume that $n$ is sufficiently large.

Fix a positive integer $D < n$ that will be determined later, such that $D$ divides $n$. We partition the $d$-dimensional cube $[n]^d$ into $D^d$ smaller cubes $Q_1,\ldots, Q_{D^d}$, where each $Q_i$ is of the form

    $$\left\{i_1 \frac{n}{D}  + 1,\ldots, (i_1 + 1) \frac{n}{D} \right\}\times \cdots \times \left\{i_d\frac{n}{D} + 1,\ldots, (i_d + 1) \frac{n}{D} \right\}.$$
    
\noindent where $i_1,\ldots, i_d \in \{0,\ldots, D-1\}$. Hence, $|Q_i| = (n/D)^d$.

Consider a random subset $A\subset [n]^d$ obtained by selecting each point in $[n]^d$ independently with probability $n^{3-d}$.   Set $P_i = Q_i\cap A$, for $i = 1,\ldots, D^d$. Let $\mathcal{W}$ be the event that the subset $A\subset [n]^d$ satisfies the following properties.

\begin{enumerate}
    \item $n^3/2 \leq |A| \leq 2n^3$.
    \item $n^3D^{-d}/2 \leq |P_j| \leq 2n^3D^{-d}$ for all $1 \leq j \leq D^d$.
    \item Every $(d-2)$-sphere contains less than $n^{c_1/\log\log n}$ points of $A$, where $c_1 = c_1(d)$.
    \item The number $(d+2)$-tuples in $A$ that lie on a common hyperplane is at most $c_2n^{2d+5}$, where $c_2 = c_2(d)$.
\end{enumerate}

By Lemma \ref{chernoff} and Markov's inequality, event $\mathcal{W}$ holds with probability at least 1/2.   Indeed, by Lemma \ref{chernoff}, the probability that $|A| > 2n^3$ or $|A| < n^3/2$ is at most $e^{-\frac{n^3}{3}} + e^{-\frac{n^3}{12}}.$ Thus, the first property holds with high probability, that is, with probability tending to 1 as $n$ approaches infinity.  A similar argument follows for the second property.  For the third property, let us fix a $(d-2)$-sphere $S$ in $\mathbb{R}^d$.  By Lemma~\ref{simplepoorCircles}, there is a constant $c= c(d)$ such that $S$ contains at most $n^{d-3 + c\log\log n}$ points from $[n]^d$.  By~Lemma~\ref{chernoff}, 

$$\mathbb{P}[|S\cap A| \geq 2n^{c/\log\log n}] \leq  e^{-\frac{n^{c/\log\log n}}{3}}.$$

\noindent   Since there are at most $n^{d^2}$ $(d-2)$-spheres in $\mathbb{R}^d$ with at least $d$ points from $[n]^d$, the union bound implies that the third property holds with high probability.  For the fourth property, let $X$ denote the number $(d+2)$-tuples in $A$ that lie on a common hyperplane.  By Lemma \ref{cohyperplaneCount}, we have $\mathbb{E}[X] \leq c'n^{2d + 5}$, where $c' = c'(d)$. By Markov's inequality (see \cite{alon}), we have

$$\mathbb{P}[X> 10c'n^{2d + 5}] < \frac{\mathbb{E}[X]}{10c'n^{2d + 5}} < \frac{1}{10}.$$

 \noindent Putting everything together, and setting $c_1, c_2$ sufficiently large, event $\mathcal{W}$ holds with probability at least $1/2$.

 Thus, let us fix $A\subset [n]^d$ with the four properties described above, and set $t = 2n^{c_3/\log\log n}$, where $c_3 = \max\{c_1,c_2\}$.  Let $\mathcal{S}$ be the collection of spheres in $\mathbb{R}^d$, such that each sphere in $\mathcal{S}$ contains at least $d+1$ points from $A$ in general position.  Hence, $|\mathcal{S}| = O(|A|^{d + 1})$. Let $\mathcal{S}_j \subset \mathcal{S}$ be the set of spheres in $\mathcal{S}$ that contains at least one point of $P_j  = Q_j\cap A$. Let $G_j = (P_j,\mathcal{S}_j,E_j)$ be the bipartite incidence graph between $P_j$ and $\mathcal{S}_j$.  Since the intersection of two distinct spheres in $\mathbb{R}^d$ is a $(d-2)$-sphere, and every $(d-2)$-sphere contains less than $t$ points from $A$, each graph $G_j$ is $K_{t,2}$-free.

 Let $\mathcal{F}_{j}$ be the set system whose ground set is $P_{j}$, and whose sets are $S\cap P_{j}$, where $S \in \mathcal{S}_{j}$ and $|S\cap P_{j}| \geq t$.  That is,

$$\mathcal{F}_{j} = \{S\cap P_{j}: S\in \mathcal{S}_{j}, |S\cap P_j| \geq t\}.$$

\noindent By Lemma~\ref{sphereVC}, the VC-dimension of $\mathcal{F}_j$ is at most $d+1$.  By inequality \eqref{sauer}, we have $$\pi_{\mathcal{F}_j}(z) = O(z^{d+1}).$$ Hence, we apply Lemma \ref{bipVC} with $t = n^{c_3/\log\log n}$ to conclude that  $$|E_j| \leq n^{c_4/\log\log n}\left(|P_j||\mathcal{S}_j|^{\frac{d}{d+1}} + |\mathcal{S}_j|\right),$$ where $c_4 = c_4(d)$.  

Given the partition $[n]^d = Q_1\cup \cdots \cup Q_{D^d}$ described above, we say that a sphere $S$ \emph{crosses} the subcube $Q_i$ if $S\cap Q_i\neq \emptyset$.  

\begin{observation}
For fixed $d \geq 2$, every sphere $S$ in $\mathbb{R}^d$ crosses at most $c_5D^{d-1}$ subcubes $Q_i$, where $c_5 = c_5(d)$.
\end{observation}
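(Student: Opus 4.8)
The plan is to use the fact that a sphere bounds a ball, so if $B$ is the ball with boundary $S=\partial B$, then $S$ is a smooth strictly convex hypersurface whose outward unit normal at a point $p$ is $\nu(p)=(p-O)/R$, where $O$ and $R$ are the center and radius of $S$. Write $m=n/D$ for the side length of the cubes $Q_i$, so that the $Q_i$ are the cells of an axis-parallel grid of spacing $m$. The central idea is to decompose $S$ into $2d$ caps on each of which the sphere is the graph of a function with a slope bounded purely in terms of $d$. Concretely, for each direction $k\in\{1,\dots,d\}$ and sign $\sigma\in\{+,-\}$ I would set
\[
\mathrm{Cap}_k^{\sigma}=\{\,p\in S:\ \sigma\,\nu_k(p)\ge 1/\sqrt d\,\}.
\]
Since $\nu(p)$ is a unit vector, some coordinate satisfies $|\nu_k(p)|\ge 1/\sqrt d$, so these caps cover $S$; hence every cube crossed by $S$ is crossed by at least one cap, and it suffices to bound the number of cubes crossed by a single cap.

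Fix $\mathrm{Cap}_k^{+}$. There $\nu_k\ge 1/\sqrt d>0$, so the cap lies on the upper sheet of $S$ in the $x_k$-direction and is the graph $x_k=h(x')$ of a single-valued function of the remaining coordinates $x'=(x_j)_{j\ne k}$. Computing $\nabla h$ from the relation $\nu_k=(1+|\nabla h|^2)^{-1/2}\ge 1/\sqrt d$ gives $|\nabla h|\le\sqrt{d-1}$. Consequently, over any cell $c'$ of the $(d-1)$-dimensional base grid (there are at most $D^{d-1}$ of them) the oscillation of $h$ is at most $\sqrt{d-1}\cdot\mathrm{diam}(c')=(d-1)m$. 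An interval of length $(d-1)m$ meets at most $d+1$ of the level-intervals $[(j-1)m,jm]$, so the cap crosses at most $d+1$ of the $D$ cubes sitting above $c'$ in the $x_k$-direction. Summing over the at most $D^{d-1}$ base cells shows $\mathrm{Cap}_k^{+}$ crosses at most $(d+1)D^{d-1}$ cubes, and summing over the $2d$ caps yields the observation with $c_5=2d(d+1)$.

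The hard part is exactly this slope control, and it is what rules out the obvious first attempt. If one instead slices $[n]^d$ into the $D$ slabs perpendicular to a single axis and projects each slab $S\cap(\text{slab})$ onto the base $\mathbb{R}^{d-1}$, the projection is a full-dimensional annular region that can itself meet $\Theta(D^{d-1})$ base cells; summed over $D$ slabs this gives only the useless bound $\Theta(D^d)$. The cap decomposition repairs this because on each cap the $x_k$-direction is genuinely transverse to $S$ (normal component $\ge 1/\sqrt d$), turning a curvature-insensitive counting problem into a Lipschitz-graph statement whose slope bound $\sqrt{d-1}$ is \emph{independent of $R$}. This radius-independence is essential, since $R$ may be comparable to $n$ (a genuinely curved sphere) or arbitrarily large (a nearly flat one), yet the same bound must hold. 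The only routine points left to check are that the cap is a single-valued graph over its shadow (immediate from $\nu_k$ having constant sign on it, using smoothness of the sphere) and the elementary conversion of an oscillation of at most $(d-1)m$ into at most $d+1$ cubes met per column.
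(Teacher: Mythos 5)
Your proof is correct, but it takes a genuinely different route from the paper. The paper argues by contradiction and packing: it picks one point of $S$ from each crossed subcube, uses a Tur\'an-type argument to extract a subfamily of points at pairwise distance at least $n/D$, observes that the spherical caps $S\cap B(p,n/(2D))$ around these points have disjoint interiors and area $\gtrsim (n/D)^{d-1}$ each, and compares the total with the surface area of a ball of radius $dn$ containing all the subcubes. You instead cover $S$ by the $2d$ caps on which some coordinate of the unit normal is at least $1/\sqrt d$ in absolute value, note that each such cap is the graph of a function with $|\nabla h|\le\sqrt{d-1}$ over a convex shadow, and count at most $d+1$ cubes per column of the grid; the oscillation bound $\sqrt{d-1}\cdot\operatorname{diam}(c')=(d-1)m$ is valid because the shadow of the cap is a ball, so its intersection with each base cell is convex and the Lipschitz estimate applies directly. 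Your version buys an explicit constant $c_5=2d(d+1)$ and, as you point out, is uniform in the radius $R$ with no case analysis; in particular it sidesteps the (minor, repairable) issue in the packing argument that for spheres of diameter smaller than $n/D$ the caps $S\cap B(p,n/(2D))$ need not have area $\gtrsim(n/D)^{d-1}$. The paper's argument, on the other hand, uses nothing about $S$ beyond disjointness and area lower bounds for metric caps, so it extends verbatim to more general hypersurfaces of bounded surface area inside a ball, where no global Lipschitz-graph decomposition into boundedly many pieces is available. Either proof suffices for the Observation.
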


\begin{proof}
   Let $c_5 = c_5(d)$ be a large constant that depends only on $d$.  We will determined $c_5$ later.  For sake of contradiction, suppose there is a sphere $S$ in $\mathbb{R}^d$ the crosses more than $c_5D^{d-1}$ subcubes $Q_i$. Note that we will not make any serious attempts to optimize the constant $c_5$.
   
 Let $T \subset [n]^d\cap S$ be the subset of points $[n]^d\cap S$ obtained by selecting 1 point from each subcube $Q_i$ that $S$ crosses.  Hence, $|T| \geq c_5D^{d-1}.$ Let $\mathcal{G} = (T,E)$ be an auxiliary graph on $T$, where two points in $T$ are adjacent if their distance is less than $n/D$.  Since two points in $T$ have distance less than $n/D$ only if they lie in adjacent subcubes $Q_i$ and $Q_j$, this implies that $\mathcal{G}$ has maximum degree $3^d - 1$.  By Turan's theorem, we can find a subset $T'\subset T$, such that the minimum distance between any two points in $T'$ is at least $n/D$.  Moreover, $|T'| \geq (c_5/3^d)D^{d-1}$.  

For each point $p \in T'$, consider the spherical cap $C_p$ obtained by intersecting $S$ with the ball $B_p$ centered at $p$ with radius $n/(2D)$.  Since the minimum distance among the points in $T'$ is at least $n/D$, the collection of spherical caps corresponding to the points in $T'$ will have pairwise disjoint interiors.  Moreover, since each spherical cap arises from a ball whose center is on $S$ with radius $n/(2D)$, each spherical cap will have area at least $c_6(n/D)^{d-1}$ where $c_6 = c_6(d)$.   Hence, the total area of all of the spherical caps corresponding to the points in $T'$ is at least

$$|T'|c_5(n/D)^{d-1} \geq \frac{c_5c_6}{3^d}n^{d-1}.$$

   \noindent On the other hand, all of the spherical caps on $S$ will lie inside of a ``large" ball $B$ with radius $dn$. Hence, the area of all of the spherical caps is at most the surface area of the ball $B$ with radius $dn$.   Since the surface area of $B$ is at most $c_7n^{d-1}$, where $c_7 = c_7(d)$, we have

   $$c_7n^{d-1} \geq \frac{c_5c_6}{3^d}n^{d-1}.$$

\noindent By setting $c_5$ sufficiently large, we have a contradiction.\end{proof}

By the observation above, $\sum_{j=1}^{D^d} |\mathcal{S}_j| = O(D^{d-1}|\mathcal{S}|)$. Putting all of these bounds together and applying Jensen's inequality, the number of incidences between $A$ and $\mathcal{S}$ is at most

    \begin{align*} \sum_{j=1}^{D^d} |E_j| &\leq n^{c_4/\log\log n}\sum_{j=1}^{D^d}  \left( |P_j||\mathcal{S}_j|^{\frac{d}{d+1}} + |\mathcal{S}_j| \right)\\
     &  \leq 2 n^{3 + c_4/\log\log n}D^{-d}\sum_{j=1}^{D^d}|\mathcal{S}_j|^{\frac{d}{d+1}} + n^{c_4/\log\log n}\sum_{j=1}^{D^d}|\mathcal{S}_j|  \\
    & \leq 2 n^{3 + c_4/\log\log n}D^{-d} D^{\frac{d}{d+1}}\left( \sum_{j=1}^{D^d} |\mathcal{S}_j| \right)^{\frac{d}{d+1}}  + c_5D^{d-1} n^{ c_4/\log\log n} |\mathcal{S}| \\
    & \leq 2 c_5^{\frac{d}{d+1}}n^{3 + c_4/\log\log n}D^{-\frac{d}{d+1}} |\mathcal{S}|^{\frac{d}{d+1}} + c_5D^{d-1} n^{ c_4/\log\log n} |\mathcal{S}|.
    \end{align*}

    Set $D$ to be a power of 2 such that

    $$   n^{\frac{3(d+1)}{d^2+d-1}} |\mathcal{S}|^{-\frac{1}{d^2+d-1}} \leq D \leq 2 n^{\frac{3(d+1)}{d^2+d-1}} |\mathcal{S}|^{-\frac{1}{d^2+d-1}}.$$ Note that $|\mathcal{S}| \leq |A|^{d+1}$, and therefore $1 < D < n$.  From above, the number of incidences between $\mathcal{S}$ and $A$ is at most$$n^{\frac{3(d^2-1)}{d^2+d-1}+c_8/\log\log n} |\mathcal{S}|^{\frac{d^2}{d^2+d-1}},$$ where $c_8 = c_8(d)$.  Let $\mathcal{S}_r\subset \mathcal{S}$ denote the set of $r$-rich spheres in $\mathcal{S}$, that is, the set of spheres in $\mathcal{S}$ with at least $r$ points from $A$. Then we have $$r|\mathcal{S}_r| \leq n^{\frac{3(d^2-1)}{d^2+d-1}+c_8/\log\log n} |\mathcal{S}_r|^{\frac{d^2}{d^2+d-1}},$$
    
    \noindent which implies 
    
$$|\mathcal{S}_r| \leq n^{3(d+1)+c_8/\log\log n} r^{-\frac{d^2+d-1}{d-1}}.$$

Let $\mathcal{T}_i\subset \mathcal{S}$ be the set of spheres in $\mathcal{S}$ with at least $2^i$ points from $A$, and at most $2^{i +1}$ from $A$.  Hence

$$|\mathcal{T}_i| \leq |\mathcal{S}_{2^i}| = n^{3(d+1)+c_8/\log\log n} 2^{-i\frac{d^2+d-1}{d-1}}.$$

\noindent Therefore, the number of $(d+2)$-tuples of $A$ that lie on a common sphere $S \in \mathcal{S}$ is at most

$$\sum\limits_{i = 1}^{\log |A|} |\mathcal{T}_i|\binom{2^{i + 1}}{d+2} \leq n^{3(d+1)+c_8/\log\log n}\sum\limits_{i = 1}^{\log |A|}  2^{-i\frac{1}{d-1}} \leq n^{3(d+1)+c_8/\log\log n}. $$

We now consider a random subset $A'\subset A$ obtained by selecting each point in $A$ independently with probability $p$, where $p$ will be determined later. Let $X$ be the number of $(d+2)$-tuples of $A'$ that lie on a common sphere or a hyperplane.  Then by linearity of expectation, we have

$$\mathbb{E}[X] \leq n^{3(d+1)+c_8/\log\log n}p^{d+2} + O(n^{2d + 5})p^{d+2} \leq n^{3(d+1)+c_9/\log\log n}p^{d+2},$$

\noindent and

$$\mathbb{E}[|A'|] = p|A|\geq n^3p/2,$$where $c_9 = c_9(d)$.  Hence, there is a constant $c_{10} = c_{10}(d)$ such that for $p = n^{-3d/(d+1) - c_{10}/\log\log n}$, we have $\mathbb{E}[X] < \mathbb{E}[|A'|]/4$.  By Lemma \ref{chernoff}, there is a subset $A'\subset A$ of size at least $p|A|/2$, such that $A'$ contains at most $p|A|/4$ $(d+2)$-tuples on a sphere or a hyperplane.  By deleting one point from each $(d+2)$-tuple on a sphere or a hyperplane, we obtain a subset $A''\subset A'$ of size at least
$$|A|p/4 \geq n^{\frac{3}{d+ 1} - 2c_{10}/\log\log n} = n^{\frac{3}{d+ 1} - o(1)},$$

\noindent such that no $d+2$ members in $A''$ lie on a sphere or a hyperplane.\end{proof}

\section{Concluding remarks}

Several authors \cite{roth,Le,BK} observed that if $n$ is prime, then by selecting $n$ points from $[n]^d$ on the ``modular moment curve" $(x,x^2,\ldots, x^d) \mod n$, for $x = 1,\ldots, n$, no $d+1$ points will lie on a hyperplane, and moreover, no $2d+1$ points lie on a sphere.  Unfortunately, this construction may contain $(d+2)$-tuples on a sphere.  Nevertheless, one can make the following simple observation.

\begin{theorem}\label{mcurve} Let $d \geq 2$ be a fixed positive integer. Then there is a subset of the $d$-dimensional lattice cube $[n]^d$ of size $\Omega(n)$ with no $d + 1$ points on a hyperplane, and no $2d$ points on a sphere.
\end{theorem}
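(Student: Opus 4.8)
The plan is to exploit the modular moment curve construction already described in the concluding remarks, and to verify directly that it satisfies both nondegeneracy conditions by a Vandermonde/polynomial-counting argument. Specifically, I would take $n$ to be prime (handling general $n$ by passing to a prime between $n/2$ and $n$, which only affects the hidden constant in the $\Omega(n)$ bound), and set
\[ P = \{ (x, x^2, \ldots, x^d) \bmod n \colon x = 1, \ldots, n \} \subset [n]^d, \]
so that $|P| = n$ since the first coordinate alone distinguishes the points. The two claims to prove are that no $d+1$ points of $P$ lie on a hyperplane, and that no $2d$ points lie on a sphere.

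For the hyperplane claim, suppose $d+1$ points with parameters $x_0, \ldots, x_d$ lie on a hyperplane $\sum_{i=1}^d a_i y_i = b$ over $\mathbb{Z}/n\mathbb{Z}$. Plugging in the curve gives, for each $j$, the relation $\sum_{i=1}^d a_i x_j^i = b$, i.e.\ each $x_j$ is a root of the degree-$d$ polynomial $f(x) = \sum_{i=1}^d a_i x^i - b$ over the field $\mathbb{F}_n$. A nonzero such $f$ has at most $d$ roots, so $d+1$ distinct roots force all coefficients to vanish, contradicting the hyperplane being a genuine affine hyperplane (the $a_i$ not all zero). For the sphere claim, the defining equation of a sphere in $\mathbb{R}^d$ has the form $\sum_{i=1}^d (y_i^2 + \alpha_i y_i) = \beta$ with leading coefficient on $\sum y_i^2$ normalized to $1$; substituting the curve yields a polynomial in $x$ whose top-degree term is $y_d^2 = x^{2d}$, so $2d$ points on a common sphere would be $2d$ distinct roots of a nonzero polynomial of degree $2d$ over $\mathbb{F}_n$ whose leading coefficient is $1$, again impossible. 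Care is needed to confirm the leading coefficient really is nonzero: the sphere equation $\sum_i (y_i - c_i)^2 = \rho^2$ forces the coefficient of $y_d^2$ to be exactly $1$, which survives the substitution as the coefficient of $x^{2d}$.

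I would organize the write-up as: (i) reduce to $n$ prime; (ii) define $P$ and note $|P| = n$; (iii) run the degree-$d$ root-counting argument over $\mathbb{F}_n$ for hyperplanes; (iv) run the degree-$2d$ root-counting argument over $\mathbb{F}_n$ for spheres. The main subtlety — and the step I expect to require the most care — is that the incidence conditions (lying on a hyperplane, lying on a sphere) are stated geometrically over $\mathbb{R}$, whereas the points of $P$ are genuine integer points in $[n]^d$ arising as residues; one must argue that a real hyperplane or sphere through these lattice points induces, after clearing denominators and reducing modulo $n$, a nontrivial polynomial relation over $\mathbb{F}_n$ of the claimed degree with the claimed nonvanishing leading coefficient. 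Establishing that this reduction is valid (in particular that reducing mod $n$ does not accidentally kill the leading coefficient) is the crux; once it is in place, the finite-field root count finishes both parts immediately.
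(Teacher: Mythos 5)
Your hyperplane argument is fine, but your sphere argument has an off-by-one error that is fatal to the statement as claimed. A nonzero polynomial of degree $2d$ over $\mathbb{F}_n$ may perfectly well have $2d$ distinct roots; root counting only forbids $2d+1$ of them. So exhibiting $2d$ points of the moment curve on a common sphere produces $2d$ distinct roots of a monic degree-$2d$ polynomial, which is not a contradiction. Your argument therefore proves only the weaker (and already known, per the remarks preceding the theorem) statement that no $2d+1$ points lie on a sphere; it does not prove the theorem.

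The paper closes exactly this gap with an extra idea you are missing: it does \emph{not} take the full curve $x = 1, \ldots, n$, but only the initial segment $1 \leq x \leq \lfloor n/(4d) \rfloor$ (still $\Omega(n)$ points). The sphere polynomial $\sum_{i=1}^d (x^i - c_i)^2 - r^2$ has leading term $x^{2d}$ and, crucially, \emph{zero} coefficient on $x^{2d-1}$, since every monomial it produces has degree $2i$ or $i$ for some $i \leq d$. If it had $2d$ roots $s_1, \ldots, s_{2d}$ in $\mathbb{F}_n$ it would factor completely, and Vieta's formula would force $s_1 + \cdots + s_{2d} \equiv 0 \pmod{n}$. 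But with all parameters confined to $[1, n/(4d)]$, the sum of any $2d$ of them lies strictly between $0$ and $n$, a contradiction. Without some such additional constraint on the parameter set, the bound of $2d$ in the theorem is not reachable by pure root counting. (Separately, the reduction from a real sphere to a sphere over $\mathbb{F}_n$ that you correctly flag as delicate --- clearing denominators without killing the leading coefficient mod $n$ --- is also treated only implicitly in the paper, but that is a shared informality; the missing $n/(4d)$ truncation and the vanishing $x^{2d-1}$ coefficient are the substantive gap in your proposal.)
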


\begin{proof} 

Let $n$ be prime, and let $A\subset [n]^d$ be points from the $d$-dimensional cube on the moment curve $(x,x^2,\ldots, x^d) \mod n$, for $1 \leq x \leq \lfloor n/(4d)\rfloor$. Hence, $|A| = \lfloor n/(4d)\rfloor$.  For sake of contradiction, suppose there is a sphere $S$ in $\mathbb{R}^d$, such that $S$ contains $2d$ points from $A$. Note that this means $S$ contains $2d$ points of $[n]^d$ when viewed as a sphere in $\mathbb{F}_n^d$. Let the center of $S \subset \mathbb{F}_n^d$ be $(c_1,\ldots, c_d)$ and radius $r$, such that $S$ contains $2d$ points from $A$. Then we have $2d$ solutions $s_1,s_2,\ldots, s_{2d}$ to the equation

$$(x-c_1)^2 + (x^2 - c_2)^2 + \cdots (x^d - c_d)^2 - r^2 = 0 \mod n.$$

    \noindent On the other hand, by the division algorithm, we have

    $$(x-c_1)^2 + (x^2 - c_2)^2 + \cdots (x^d - c_d)^2 - r^2  = (x - s_1)(x-s_2)\cdots (x-s_{2d})  \mod n,$$

    \noindent which implies that $s_1 + \cdots +s_{2d} = 0 \mod n$ as the coefficient of $x^{2d-1}$ is $0\mod n$.  However, $s_i \leq  n/(4d)$, which implies $s_1 + \cdots +s_{2d} \neq 0 \mod n$, contradiction.  Hence, no $2d$ points in $A$ lie on a sphere and no $d+1$ points lie on a hyperplane.  If $n$ is not prime, we apply Bertrand's Postulate to obtain a prime $n' < n$ such that $n' > n/2$, and apply the argument above to $[n']^d$.\end{proof}

%{\color{red} A small point: the way center $(c_1,\ldots,c_d)$ is introduced, it is a point in $\mathbb{R}^d$, and so doesn't make sense to have it in the modular equation below. What is true is that if some points are on a sphere in $\mathbb{R}^d$, then they are on a sphere modulo any modulus, so this argument is ok. }

Another natural question is to determine the maximum number of points that can be selected from $[n]^d$ with no $d+2$ points on a sphere, but allowing many points on a hyperplane.  Since each point from $[n]^d$ lies on a sphere centered at the origin with radius $\sqrt{t}$, where $t  = 1,2,\ldots, dn^2$, we have an upper bound of $d(d+1)n^2$ for this problem. Using Lemma \ref{simplepoorCircles} and the probabilistic method, one can show the existence of $n^{2 - 4/(d+1) - o(1)}$ points from $[n]^2 \subset [n]^d$ with no $d+2$ on a sphere.

\end{document}